 \newtheorem{theorem}{Theorem}[section]
 \newtheorem{remarks}[theorem]{Remarks}
 \newtheorem{pro}[theorem]{Proposition}
\author{ Gregory Seregin
}
\title{Remarks on Type II blowups of solutions to the Navier-Stokes equations}
\author{G.~Seregin\footnote{University of Oxford, Mathematical Institute, OxPDE, Oxford, UK and St Petersburg Department of Steklov Mathematical Institute, RAS, Russia, email address: \texttt{seregin@maths.ox.ac.uk}}
}
\begin{document}


\maketitle

\rightline{\it Dedicated to Vladimir Sverak}

\begin{abstract}  In the note, the Euler scaling is used to study a certain scenario of potential Type II blowups of solutions to the Navier-Stokes equations.
\end{abstract}

{\bf Keywords} Navier-Stokes equations,
regularity, blowups.  

{\bf Data availability statement}
Data sharing not applicable to this article as no datasets were generated or analysed during the current study.



\setcounter{equation}{0}
\section{Type II Blowups}

Let us consider a suitable weak solution $v$ and $q$ to the classical Navier-Stokes equations, describing the flow of a viscous incompressible fluid in a unique parabolic cylinder $Q$. By definition, see \cite{CKN}, \cite{Lin}, and \cite{LS1999}, this pair of functions $v$ and $q$ has the following properties:
\begin{itemize}
	\item $v\in L_\infty(-1,0;L_2(B)),\quad \nabla v\in L_2(Q),\quad q\in L_\frac 32(Q);$
	\item 
$\partial_tv+v\cdot\nabla v-\Delta v+\nabla q=0,\quad{\rm div}\,v=0$\\
in $Q$ in the sense of distributions;
\item
for a.a. $t\in ]-1,0[$, the local energy inequality 
$$\int\limits_B\varphi(x,t)|v(x,t)|^2dx+2\int\limits^t_{-1}\int\limits_B\varphi|\nabla v|^2dxd\tau\leq 
$$
$$\leq \int\limits^t_{-1}\int\limits_B (|v|^2(\partial_t\varphi+\Delta\varphi)+v\cdot\nabla\varphi(|v|^2+2q))dxd\tau$$ holds
for all smooth non-negative functions $\varphi$ vanishing in a vicinity of a parabolic boundary of the cylinder $Q$. 
\end{itemize}
Here and in what follows, the standard notation is used:
$Q(r)=B(r)\times ]-r^2,0[$ is a parabolic cylinder 
and $B(r)$ is a spatial ball of radius $r$ centred at the origin $x=0$, $B=B(1)$, and $Q=Q(1)$.

Our aim is to find assumptions for the space-time origin $z=(x,t)=0$ to be a regular point of $v$, i.e., for the existence of a 
positive number $r$ such that $v\in L_\infty(Q(r))$. We say that $z=0$ is a blowup point of $v$ or a singular point of $v$ if it is not a regular point of $v$. We shall distinguish between Type I and Type II blowups, see \cite{Seregin2010} for the original definition.
 It is said that $z=0$ is a Type I blowup of $v$ if 
\begin{equation}
	\label{DefTypeI}
	g=\max\{\limsup\limits_{r\to0}A(v,r),\limsup\limits_{r\to0}E(v,r),\limsup\limits_{r\to0}C(v,r)\}<\infty.
\end{equation}
Here, the following energy quantities are used:
$$A(v,r)=\frac1r\sup\limits_{-r^2<t<0}\int\limits_{B(r)}|v(x,t)|^2dx,\quad
E(v,r)=\frac 1r\int\limits_{Q(r)}|\nabla v|^2dz,
$$
$$C(v,r)=\frac 1{r^2}\int\limits_{Q(r)}|\nabla v|^2dz.
$$
The important property of the above  quantities is that all of them are invariant with respect to the Navier-Stokes scaling, i.e., if $v^\lambda(y,s)=\lambda v(x,t)$, and $q^\lambda(y,s)=\lambda^2 q(x,t)$ with $x=\lambda y$ and $t=\lambda^2s$, then, for example, $A(v^\lambda,r)=A(v,\lambda r)$, etc.

According to the celebrated Cafarelli-Kohn-Nirenberg theory, which is, in fact, $\varepsilon$-regularity theory developed for the specific case of the Navier-Stokes equations, $z=0$ is a regular point of $v$ if $g$ is sufficiently small, see details in \cite{Seregin2015}. The question about Type I blowups is whether the boundedness of $g$ allows blowups or not. It is still open.

There is a sufficient condition for the point $z=0$ to be a Type I blowup of $v$, see more details in papers  \cite{SZ2006} and   \cite{SS2018}, and it is as follows:
\begin{equation}
	\label{TypeI}
\limsup\limits_{R\to0}M_\kappa^{s,l}(v,R)=\limsup\limits_{R\to0}\frac 1{R^\kappa}\int\limits_{-R^2}^0\Big(\int\limits_{B(R)}
|v|^sdx\Big)^\frac lsdt<\infty\end{equation}
with
$$\quad \kappa=l\Big(\frac 3s+\frac 2l-1\Big),$$
where numbers $s\geq1$ and $l\geq1$ obey the restrictions
\begin{equation}\label{add-cond-sl}
1>\frac 3s+\frac 2l-1>\frac 12+\max\Big\{\frac 1{2l},\frac 12-\frac 1l\Big\}.\end{equation}

In fact, the above sufficient condition is true for a wider set of numbers $s$ and $l$. The following statement is a simple extension  of the corresponding result in \cite{SS2018}.

\begin{theorem}
\label{extension} Let $v$ and $q$ be a suitable weak solution to the Navier-Stokes equations in $Q$. Assume that real number $s\geq 1$
and $l\geq 1$	satisfy the restriction
\begin{equation}
	\label{weak13gen}
	1>\frac 3s+\frac 2l-1 (\Longleftrightarrow l>\kappa).
\end{equation} Suppose that the condition 
\begin{equation}
\label{TypeIA}
\mathcal M=\sup\limits_{0<R<1}M^{s,l}(R)<\infty,	
\end{equation}
see \eqref{TypeI} for the definition of $M^{s,l}(R):=M^{s,l}_\kappa(v,R)$, holds. Then, there exist positive constants $\varepsilon_0$, $c$, and $\alpha_0<1$ such that
\begin{equation}
	\label{decayA}
\mathcal E(r)\leq cr^{\alpha_0}\mathcal E(1)+c\varepsilon+c_0(s,l,\varepsilon,\mathcal M)
\end{equation}
for all $0<r\leq1$ and for all $0<\varepsilon\leq\varepsilon_0$. Moreover, $c_0$ is a continuous function of its arguments and  such that $c_0(s,l,\varepsilon,\mathcal M)\to 0$ as $\mathcal M\to0$. 

Here,
$\mathcal E(r)=E(v,r)+A(v,r)+D(q,r)$.
\end{theorem}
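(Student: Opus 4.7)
The plan is to extend the iteration argument of \cite{SS2018} to the wider range of exponents permitted by \eqref{weak13gen} by sharpening the treatment of the convection and pressure terms. The strategy is to prove a one-step decay inequality of the form
\begin{equation*}
\mathcal E(\theta R)\le c_1\theta^{2}\mathcal E(R)+c_2\theta^{-\beta}\varepsilon\,\mathcal E(R)+c_2\theta^{-\beta}c_0(s,l,\varepsilon,\mathcal M)
\end{equation*}
on every scale $R\in(0,1]$, for a fixed $\theta\in(0,1)$ and any small $\varepsilon>0$, and then iterate along $R_k=\theta^k$.

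To obtain this one-step inequality I would first split the pressure on $B(R)$ as $q=q_R^{(1)}+q_R^{(2)}$, with $q_R^{(1)}$ the Newtonian potential of $\partial_i\partial_j(v_iv_j\chi_{B(R)})$ and $q_R^{(2)}$ harmonic in $B(R)$. Calder\'on--Zygmund gives $q_R^{(1)}\in L^{l/2}_tL^{s/2}_x$ with norm controlled by $\mathcal M^{2/l}R^{2\kappa/l}$ (up to harmless factors), while harmonicity of $q_R^{(2)}$ yields a mean-value decay of the form $D(q_R^{(2)},\theta R)\le c\,\theta^{\alpha}D(q,R)+c\,D(q_R^{(1)},R)$ for some $\alpha>0$. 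I would then plug a standard cutoff $\varphi$ supported in $Q(R)$ and equal to $1$ on $Q(\theta R)$ into the local energy inequality. The linear terms $|v|^2(\partial_t\varphi+\Delta\varphi)$ are estimated by H\"older against the $M^{s,l}$-norm, using the integrability provided by \eqref{weak13gen}. The cubic convective term $\int v\cdot\nabla\varphi\,|v|^2$ is treated by factoring $|v|^3=|v|\cdot|v|^2$, taking $|v|$ in $L^s_xL^l_t$ (where $\mathcal M$ provides a uniform bound) and $|v|^2$ in the complementary Lebesgue indices, which by interpolation with the scale-invariant energy norms is controlled by $A(v,R)$ and $E(v,R)$; a standard $\varepsilon$-Young step separates the scale-invariant part from the residual, producing the contributions $\varepsilon\,\mathcal E(R)$ and $c_0(s,l,\varepsilon,\mathcal M)$. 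The pressure contribution in the local energy inequality is split along the decomposition above and handled identically.

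Choosing $\theta$ so that $c_1\theta^2\le\tfrac12\theta^{2\alpha_0}$ and then $\varepsilon$ so that $c_2\theta^{-\beta}\varepsilon\le\tfrac12\theta^{2\alpha_0}$, with $0<\alpha_0<1$, one iterates the decay inequality to obtain \eqref{decayA}. The continuity of $c_0$ in its arguments and the property $c_0\to 0$ as $\mathcal M\to 0$ follow from the fact that every term carrying $\mathcal M$ does so through a strictly positive power. The main obstacle is the cubic term: under the restrictive hypothesis \eqref{add-cond-sl} the H\"older splitting is exactly scale-invariant, whereas under the weaker hypothesis \eqref{weak13gen} a residual power of $R$ appears. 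The key idea is to absorb that residual via the $\varepsilon$-Young trick, transferring the loss from the scaling to the parameter $\varepsilon$ and to the $\mathcal M$-dependent constant $c_0$, at the price of the additive terms $c\varepsilon$ and $c_0$ in \eqref{decayA}.
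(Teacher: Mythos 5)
Your overall architecture (one-step decay plus iteration, Newtonian-plus-harmonic pressure splitting, local energy inequality, H\"older against $\mathcal M$, $\varepsilon$-Young) is the standard machinery that the paper also relies on, via \cite{SS2018} and its Appendix II, and that part is fine. The gap is in your treatment of the cubic term, which is exactly where the new content of the theorem lives. Writing $|v|^3=|v|\cdot|v|^2$ and putting the single factor $|v|$ in $L^l_tL^s_x$ forces, by homogeneity, the complementary factor to carry the \emph{full first power} of the energy: any bound $C(v,R)\le c\,\mathcal M^{a}(A(v,R)+E(v,R))^{b}$ obtained by H\"older/Sobolev on $Q(R)$ must satisfy $al+2b=3$, so $a=1/l$ gives $b=1$, i.e.\ $C(v,R)\le c\,\mathcal M^{1/l}(A(v,R)+E(v,R))$. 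With exponent exactly $1$ there is no $\varepsilon$-Young step: $\mathcal M^{1/l}(A+E)$ cannot be rewritten as $\varepsilon(A+E)+c_0(\varepsilon,\mathcal M)$ unless $\mathcal M^{1/l}\le\varepsilon$, so your one-step inequality closes only for small $\mathcal M$, whereas the theorem is claimed for arbitrary finite $\mathcal M$. Relatedly, your diagnosis that the only new difficulty under \eqref{weak13gen} is ``a residual power of $R$'' is off: that residual power is positive and harmless on $R\le1$; the real obstruction is that the H\"older exponents of the splitting of \cite{SS2018} (the $\lambda,\mu,\gamma,q$ of Appendix II) leave the admissible range, and $\mu q$ can reach $1$, when \eqref{add-cond-sl} fails.

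The paper's proof fixes this by raising \emph{more than the first power} of $|v|$ to the $L^l_tL^s_x$ norm, so that $a>1/l$ and hence $b<1$, and the correct choice depends on the position of $s$ and $l$ relative to $3$; this is why Appendix III is a case analysis. Concretely: if \eqref{add-cond-sl} fails and $s\ge3$, $l\ge3$, then $C(R)\le c\,\mathcal M^{3/l}$ outright ($b=0$); if $1\le l<3$ (whence $s>3$) one interpolates $L^3_x$ between $L^s_x$ and $L^2_x$ with $\lambda=\frac1{s-2}$, getting $C(R)\le c\,\mathcal M^{1/q}A^{\mu}(R)$ with $\mu=\frac{s-3}{s-2}<1$; if $l>3$ and $\frac32<s<3$ one interpolates between $L^s_x$ and $L^6_x$ and applies Gagliardo--Nirenberg, getting $C(R)\le c\,\mathcal M^{1/q}(E(R)+A(R))^{3\gamma}$ with $3\gamma<1$. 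In every case the strictly sublinear power of the energy is what makes the $\varepsilon$-Young absorption and the iteration of \cite{SS2018} work for arbitrary $\mathcal M$. To repair your argument you would need to replace your single complementary-exponent splitting by such an adapted, case-dependent one; the remainder of your outline then essentially coincides with the paper's.
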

A proof of Theorem \ref{extension} is outlined in Appendix III. 

In what follows, we shall consider the following restrictions on numbers $s$ and $l$:
\begin{equation}
	\label{weak13}
	l>\kappa>0.
\end{equation}
The point is that if $\kappa\leq0$ and $M^{s,l}_\kappa(v,R)<\infty$ for some $R>0$   then by the Ladyzhenskaya-Prodi-Serrin condition the origin $z=0$ is a regular point of $v$.

In this note, we would like to discuss whether the origin could be a Type II blowup, i.e., a blowup which is not of Type I. So, assume that $z=0$ is a Type II blowup of $v$. According to the above sufficient condition, there must be:
$$\limsup\limits_{r\to0}M_\kappa^{s,l}(v,r)=\infty$$
 for all numbers $s$ and $l$ satisfying restriction (\ref{weak13}). Our main assumption is about how fast quantity $M^{s,l}_\kappa(v,r)$ tends to infinity as $r\to0$. To be more precise, we are going to suppose that there are a sequence $r_k\to0$ and positive numbers $\varepsilon_0$ and $m_0<1$ such that
 $$M^{s,l}_\kappa(v,r_k)\geq \frac{\varepsilon_0}{r_k^{(1-m_0)\kappa}}$$ for some numbers $s$ and $l$ satisfying condition \eqref{weak13}.
 The latter can be written in a more compact form:
 \begin{equation}
 	\label{growth}
 	M_{\kappa,m_0}^{l,s}(v,r_k):=r_k^{(1-m_0)\kappa}M^{s,l}_\kappa(v,r_k)\geq \varepsilon_0. \end{equation}

We are going to consider the following scenario of a potential Type II blowup described by (\ref{growth}), assuming in addition that 
\begin{equation}
	\label{ScenarioI}
	A_{m_1}(v,R)+D_m(q,R)+E_m(v,R)\leq c
\end{equation}
for all $0<R\leq 1$, where
$$E_m(v,r)=\frac 1{r^{2m}}\int\limits_{Q(r)}|\nabla v|^\frac 32dz, \quad A_{m_1}(v,r)=\sup\limits_{-r^2<t<0}\frac 1{r^{m_1}}\int\limits_{B(r)}|v(x,t)|^2dx,$$
$$D_m(q,r)=\frac 1{r^{2m}}\int\limits_{Q(r)}|q|^\frac 32dz,$$
and 
\begin{equation}\label{mandm0} m_1=2m-1,\qquad m=2-\alpha,\quad
 \alpha=\frac{l\Big(\frac 3s+1\Big)-(m_0-1)\kappa}{l\Big(\frac 3s+1\Big)+(m_0-1)\kappa}.
\end{equation}
It is easy to check that $\alpha >1$ and thus $m_1<m<1$.

In order to state the main result of the paper, let us introduce numbers
\begin{equation}
	\label{numbers}
\frac{1}{	p(\lambda)}=\frac \lambda 6+\frac {3(1-\lambda)} {10}, \quad \frac 1{q(\lambda)}=\frac \lambda 2+\frac {3(1-\lambda)} {10},
\end{equation}
where $0\leq \lambda\leq 1$ is a parameters. It is easy to check 
that 
$$1>\frac 3{p(\lambda)}+\frac 2{q(\lambda)}-1=\frac 12>0$$
for all $0\leq \lambda\leq 1$.
One of our main results can be stated as follows.
\begin{pro} 
\label{ancientsolution}	Suppose that the pair $v$ and $q$ is a suitable weak solution to the Navier-Stokes equations in the unit space-time cylinder $Q$.

Assume that, for some numbers $s$ and $l$ satisfying restrictions (\ref{weak13})  and the additional inequalities
\begin{equation}
	\label{numberscond}s <p(\lambda),\quad l<q(\lambda)\end{equation}
with some $0\leq \lambda\leq 1$, the growth condition \eqref{growth} holds.

Assume further that $v$ and $q$ satisfy bound \eqref{ScenarioI} with numbers $m$ and $m_1$
defined in \eqref{mandm0}.

Then, there are two functions $u$ and $p$ defined in $Q_-=\mathbb R^3\times]-\infty,0[$, with the following properties:
	$$\sup\limits_{a>0}\Big[\sup\limits_{-a^2<s<0}\frac 1{a^{m_1}}\int\limits_{B(a)}|u(y,s)|^2dy+\frac 1{a^{2m}}\int\limits_{Q(a)}|p|^\frac 32dyds+$$
 \begin{equation}
	\label{basicestimates}
+\frac 1{a^{m}}\int\limits_{Q(a)}|\nabla u|^2dyds\Big]\leq c<\infty;\end{equation}

\begin{equation}
	\label{Euler}\partial_tu+u\cdot\nabla u+\nabla p=0, \quad{\rm div}\,u=0
\end{equation}in $Q_-=\mathbb R^3\times ]-\infty,0[$;
 
 for a.a. $\tau_0\in ]-\infty,0[$, the local energy inequality
$$\int\limits_{\mathbb R^3}|u(y,\tau_0)|^2\varphi(y,\tau_0)dy\leq$$
\begin{equation}
	\label{enerylocal}
\leq \int\limits_{-\infty}^{\tau_0}\int\limits_{\mathbb R^3}(|u|^2(\partial_s\varphi+\Delta_y\varphi)+u\cdot\nabla_y\varphi(|u|^2+2
p))dyd
\tau
\end{equation}
holds for non-negative $\varphi\in C^\infty_0(\mathbb R^3\times \mathbb R)$;
 
 the function $u$ is not trivial in the following sense
 \begin{equation}
	\label{nontrivial}
M^{s,l}_{\kappa,m_0}(u,1)\geq \frac {\varepsilon_0}2.\end{equation} 
\end{pro}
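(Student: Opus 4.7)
The plan is to construct $u$ and $p$ by a rescaling/compactness procedure centered on the blowup point $z=0$, using an Euler-type (not Navier--Stokes) scaling. For each $k$, set
\[
u_k(y,s) = r_k^{\alpha}\, v(r_k y,\, r_k^{\alpha+1} s), \qquad p_k(y,s) = r_k^{2\alpha}\, q(r_k y,\, r_k^{\alpha+1} s),
\]
with $\alpha$ from (\ref{mandm0}). The relations $\tau_k = \mu_k\lambda_k$ (with $\mu_k=r_k^\alpha$, $\lambda_k=r_k$) and $B_k = \mu_k^2$ are precisely those that make the nonlinear and pressure terms balance the time derivative in the rescaled NS equation, while the viscous term acquires a coefficient $\epsilon_k = \mu_k/\lambda_k = r_k^{\alpha-1}\to 0$ (since $\alpha>1$). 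Thus $(u_k,p_k)$ solves a Navier--Stokes system with vanishing viscosity on an exhaustive family of cylinders $Q_k\nearrow \mathbb R^3\times(-\infty,0)$, which will yield (\ref{Euler}) in the limit.

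Next I would derive uniform bounds. The exponents $m_1 = 3-2\alpha$ and $m = 2-\alpha$ from (\ref{mandm0}) are calibrated so that a direct change of variables transforms (\ref{ScenarioI}) into
\[
A_{m_1}(u_k,a) + D_m(p_k,a) + E_m(u_k,a) \leq C
\]
for every fixed $a>0$ and all $k$ with $a r_k\leq 1$. The extra $L^2$-gradient bound $\tfrac{1}{a^m}\int_{Q(a)}|\nabla u_k|^2 \leq C$ comes from the local energy inequality for $u_k$ applied with a cutoff on $Q(2a)$: the right-hand side reduces, via Gagliardo--Nirenberg/Sobolev interpolation, to quantities controlled by the preceding supercritical bounds. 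With these estimates and the rescaled equation controlling $\partial_s u_k$ in a negative-order space, an Aubin--Lions argument gives (along a subsequence) $u_k\to u$ strongly in $L^3_{\mathrm{loc}}$, $\nabla u_k\rightharpoonup\nabla u$ weakly in $L^2_{\mathrm{loc}}$, and $p_k\rightharpoonup p$ weakly in $L^{3/2}_{\mathrm{loc}}$. Passing to the limit in the rescaled equation kills the viscous term (since $\epsilon_k\to 0$ and $\nabla u_k$ is $L^2_{\mathrm{loc}}$-bounded), yielding (\ref{Euler}). The bounds (\ref{basicestimates}) follow from weak lower semicontinuity, and (\ref{enerylocal}) is inherited from the local energy inequality of $u_k$ by dropping the nonnegative dissipation term and using strong $L^3$-convergence for the cubic term and strong-$\times$-weak convergence for the $v\cdot\nabla\varphi\, q$ term.

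The heart of the argument is the nontriviality (\ref{nontrivial}). The change of variables gives
\[
M^{s,l}_{\kappa,m_0}(u_k,1) = r_k^{l\alpha - 3l/s - (\alpha+1)} \int_{-r_k^{\alpha+1}}^{0}\Big(\int_{B(r_k)}|v|^s\,dx\Big)^{l/s}dt,
\]
and the time interval $(-r_k^{\alpha+1},0)$ is genuinely shorter than the parabolic $(-r_k^2,0)$ appearing in (\ref{growth}). The strategy is to bound the truncation loss
\[
\int_{-r_k^2}^{-r_k^{\alpha+1}}\Big(\int_{B(r_k)}|v|^s\Big)^{l/s}dt
\]
by $Cr_k^{E(\lambda)}$ with $E(\lambda)>m_0\kappa$, using H\"older in time (exponent $q(\lambda)/l>1$) and space (exponent $p(\lambda)/s>1$)---both legitimate by (\ref{numberscond})---together with the Ladyzhenskaya-type interpolation $\|v\|_{L^{q(\lambda)}_tL^{p(\lambda)}_x(Q(r_k))}\leq Cr_k^{f(\lambda)}$, which follows from $A_{m_1}(v,r)$ and the derived $L^2$-gradient bound by interpolating $L^{10/3}(Q)$ with $L^2_tL^6_x$. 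The precise formula for $\alpha$ in (\ref{mandm0}) is exactly the one that makes $E(\lambda)>m_0\kappa$ close, so (\ref{growth}) implies $\int_{-r_k^{\alpha+1}}^0(\cdots)dt\geq \tfrac{\varepsilon_0}{2}r_k^{m_0\kappa}$ for large $k$, and arithmetic with the prefactor yields $M^{s,l}_{\kappa,m_0}(u_k,1)\geq \varepsilon_0/2$. A Fatou-type argument combined with a.e.\ convergence (from strong $L^3_{\mathrm{loc}}$) passes the bound to $u$.

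The main obstacle is this last step. Two subtleties are in tension: on one hand, the Euler scaling is forced by the requirement $\epsilon_k\to 0$, which makes the rescaled time interval $\tau_k=r_k^{\alpha+1}$ shrink faster than parabolically; on the other hand, (\ref{growth}) is stated on the parabolic cylinder $Q(r_k)$. Reconciling these requires a quantitative interpolation estimate whose exponent must beat $m_0\kappa$, and the balance is exactly calibrated by the algebraic formula (\ref{mandm0}) for $\alpha$---together with the auxiliary inequalities (\ref{numberscond}) that make the H\"older step available in the first place.
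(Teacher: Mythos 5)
Your overall architecture (Euler-type rescaling with vanishing viscosity coefficient $r_k^{\alpha-1}$, uniform Morrey bounds from \eqref{ScenarioI}, a time-derivative estimate, Aubin--Lions, passage to the limit in the equation and in the local energy inequality) matches the paper. But there is a fatal gap in the nontriviality step, caused by your choice of scaling parameter. You set $u_k(y,s)=r_k^{\alpha}v(r_ky,r_k^{\alpha+1}s)$, so the unit cylinder pulls back to $B(r_k)\times\,]-r_k^{\alpha+1},0[\,\subsetneq Q(r_k)$, and you must control the ``truncation loss'' $\int_{-r_k^2}^{-r_k^{\alpha+1}}\bigl(\int_{B(r_k)}|v|^s\,dx\bigr)^{l/s}dt$ by $Cr_k^{E}$ with $E>m_0\kappa$. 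This cannot work: the excluded slab has temporal measure $r_k^2-r_k^{\alpha+1}\sim r_k^2$, the same order as the full interval, so any H\"older/interpolation bound of the type you describe applies verbatim to $\int_{-r_k^2}^{0}$ as well, and a bound $Cr_k^E$ with $E>m_0\kappa$ there would contradict the hypothesis \eqref{growth} for large $k$. Nothing prevents the mass in \eqref{growth} from concentrating in $]-r_k^2,-r_k^{\alpha+1}[$, in which case your $M^{s,l}_{\kappa,m_0}(u_k,1)$ degenerates. The paper avoids the issue entirely by choosing the scaling parameter $\lambda_k$ through $\lambda_k^{(\alpha+1)/2}=r_k$, i.e.\ $\lambda_k=r_k^{2/(\alpha+1)}>r_k$: then the unit cylinder pulls back to $B(\lambda_k)\times\,]-r_k^2,0[\,\supset Q(r_k)$, monotonicity in the spatial domain gives the lower bound with no time truncation at all, and the formula \eqref{mandm0} for $\alpha$ is precisely what makes the scaling prefactor equal to $1$. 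Your reading of \eqref{mandm0} as calibrating a truncation exponent is not what that formula does.

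Two further points. First, your ``Fatou-type argument combined with a.e.\ convergence'' transfers the lower bound in the wrong direction: Fatou gives $M^{s,l}_{\kappa,m_0}(u,1)\le\liminf_k M^{s,l}_{\kappa,m_0}(u_k,1)$, which is useless for \eqref{nontrivial}. One needs genuine strong convergence in $L_{s,l}(Q)$, and this is exactly where \eqref{numberscond} enters in the paper: compactness in $L_{6-\varepsilon,2}$ and in $L_{10/3-\varepsilon}$ is interpolated to give strong convergence in $L_{\tilde p(\lambda),\tilde q(\lambda)}$ with $s\le\tilde p(\lambda)<p(\lambda)$, $l\le\tilde q(\lambda)<q(\lambda)$; in your proposal \eqref{numberscond} is instead spent on the (unworkable) truncation estimate, leaving the limit passage unjustified. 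Second, the uniform bound on $\int_{Q(a)}|\nabla u_k|^2$ cannot come from the rescaled local energy inequality, because after the Euler scaling the dissipation term carries the vanishing coefficient $r_k^{\alpha-1}$ and therefore only controls $r_k^{\alpha-1}\int\varphi|\nabla u_k|^2$; the gradient bound has to be imported directly from the hypothesis \eqref{ScenarioI} via the change of variables, as the paper does.
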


\begin{remarks} \label{remark1}
Conditions 	\eqref{numberscond} provides a certain compactness which is needed to get property \eqref{nontrivial}. They do not seem to be optimal.
\end{remarks}

A proof of the statement is based on zooming  arguments for a certain Euler scaling and is given in the next section.

In Section 3, simple cases are discussed.
The section contains  some trivial Liouville type theorems, stating that the function $u$ of Proposition \ref{ancientsolution} cannot be non-trivial. In particular, it follows from them that in order to have a function $u$, satisfying all the statements of Proposition    \ref{ancientsolution},  $m$ must be bigger or equal to 1/2 or $u$ may not be irrotational.

In section 4, we are answering the question: what kind of Type II blowups are potentially possible under  scenario (\ref{ScenarioI}). To this end, we are going to use local energy estimates in terms of scaled energy quantitates, see Appendix II, and what kind of Type II blowup can be expected if boundedness of the quantity $M^{s,l}_{\kappa,m_0}$ takes place, see Appendix I.

The last section addresses a possible way to look for a non-trivial function $u$ in the self-similar form or in the discrete self-similar form. A certain Liouville type theorem for the self-similar profile has been proven in Proposition \ref{simpleLiouville}.

At the end of  Section 1, we would like to raise the following interesting question. Assume that we are able to construct a non-trivial solution $u$ that satisfies all the properties listed in Proposition \ref{ancientsolution}.
Can we construct a suitable weak solution $v$ and $q$ in $Q$, undergoing a blowup at $z=0$, with the help of function $u$. In the case of Type I blowups, there is an answer to this question and it is more or less positive, see papers \cite{AlBar2018} and \cite{Seregin2020}.

\setcounter{equation}{0}
\section{Proof of Proposition \ref{ancientsolution}}

In order to prove our statement, we are going to use the Euler scaling	$$v^{\lambda,\alpha}(y,\tau)=\lambda^\alpha v(x,t), \qquad q^{\lambda,\alpha}(y,
\tau)=\lambda^{2\alpha}q(x,t),$$
$$x=\lambda y,\qquad t=\lambda^{\alpha+1}\tau,$$
where $\alpha$ is defined in (\ref{mandm0}). It is  known that the Euler equations are invariant with respect to such a scaling.
As to the choice of $\lambda=
\lambda_k$, it is as follows: 
\begin{equation}
	\label{choicelambda}
\lambda_k^{\frac 12(\alpha+1)}=r_k.\end{equation}

Now, let us introduce additional notation for parabolic cylinders:
$$Q^{\lambda,\mu}(R)=\{(y,\tau):\,|y|<\lambda R,\quad -\mu R^2<\tau<0\},$$
$$
Q^{\lambda,\mu}=
Q^{\lambda,\mu}(1).$$
Then, by scaling, by the choice of $\lambda<1$, and by the choice of $\alpha>1$, we have
$$ M^{s,l}_{\kappa,m_0}(v^{\lambda,\alpha},1)=\lambda^{\alpha l-\frac {3l}s-\alpha-1}\int\limits^0_{-(\lambda^{\frac {\alpha+1}2})^2}\Big(\int\limits_{B(\lambda)}|v|^sdx\Big)^\frac lsdt\geq 
$$
$$\geq \lambda^{\alpha l-\frac {3l}s-\alpha -1}(\lambda^{\frac {\alpha+1}2})^{\kappa m_0}M^{s,l}_{\kappa,m_0}(v,\lambda^{\frac {\alpha+1}2})=M^{s,l}_{\kappa,m_0}(v,r_k).
$$
So, finally,
\begin{equation}
	\label{non-trivialk}
M^{s,l}_{\kappa,m_0}(v^{\lambda,\alpha},1)\geq \frac {\varepsilon_0}2\end{equation}
 for all $k$.

Now, after making the Euler scaling in the Navier-Stokes equitations, multiplying them by $v^{\lambda,\alpha}$, and integrating by parts, we arrive  at the following identity:
$$\int\limits_{B(a)}\partial_\tau v^{\lambda,\alpha}(y,\tau)\cdot w(y)dy=\int\limits_{B(a)}v^{\lambda,\alpha}(y,\tau)\otimes v^{\lambda,\alpha}(y,\tau):\nabla w(y)dy+$$
\begin{equation}
	\label{deriveintime}
	+\lambda^{\alpha-1}\int\limits_{B(a)}v^{\lambda,\alpha}(y,\tau)\cdot \Delta w(y)dy+\int\limits_{B(a)}q^{\lambda,\alpha}(y,\tau){\rm div}\,w(y)dy,	\end{equation}
which is valid for any $w\in C^\infty_0(B(a))$. So,  applying of various multiplicative inequalities,
we find
$$\Big|\int\limits_{B(a)}\partial_\tau v^{\lambda,\alpha}(y,\tau)\cdot w(y)dy\Big|\leq $$$$\leq c(a)\Big(\int\limits_{B(a)}(|v^{\lambda,\alpha}(y,\tau)|^3+|q^{\lambda,\alpha}(y,\tau)|^\frac 32)dy\Big)^\frac 23\Big(\int\limits_{B(a)}|\nabla^2w(y)|^\frac 32dy\Big)^\frac 23.$$
Furthermore, H\"older inequality leads us to the bound
$$\|\partial_\tau v^{\lambda,\alpha}(\cdot,\tau)\|_{H^{-2}(B(a))}\leq c(a)(\|v^{\lambda,\alpha}(\cdot,\tau)\|_{3,B(a)}^2+\|q^{\lambda,\alpha}(\cdot,\tau)\|_{\frac 32,B(a)})$$ and thus, as a result of integration in $\tau$, we find an important estimate for the derivative in time:
$$\|\partial_\tau v^{\lambda,\alpha}(\cdot,s)\|_{L_\frac 32(-a^2,0;H^{-2}(B(a)))}\leq c(a)(\|v^{\lambda,\alpha}\|^2_{3,Q(a)}+\|q^{\lambda,\alpha}\|_{\frac 32,Q(a)}).$$

Next, our scaling and the choice of $m$ give us:
$$A_{m_1}(v^{\lambda,\alpha},a)=
\frac {\lambda^{-3+2\alpha+m_1}}{(\lambda a)^{m_1}}\sup\limits_{-\lambda^{\alpha-1}(\lambda a)^2<t<0}\int\limits_{B(\lambda a)}|v(x,t)|^2
dx\leq A_{m_1}(v,\lambda a),$$
$$
D_m(q^{\lambda,\alpha},a)=
\frac {\lambda^{-4+2\alpha+2m}}{a^{2m}}\int\limits_{Q^{\lambda,{\lambda^\alpha-1}}(\lambda a)}|q|^\frac 32dz\leq D_m(q,\lambda a),$$
$$E_m(v^{\lambda,\alpha},a)=
\frac {\lambda^{-2+\alpha+m}}{a^{m}}\int\limits_{Q^{\lambda,{\lambda^\alpha-1}}(\lambda a)}|\nabla v|^2dz\leq E_m(v,\lambda a).$$
Hence, from (\ref{ScenarioI}), the Morrey type estimate comes out:
\begin{equation}
	\label{uniformbound}
A_{m_1}(v^{\lambda,\alpha},a)+D_m(v^{\lambda,\alpha},a)+E_m(v^{\lambda,\alpha},a)\leq c\end{equation}
for all $1/\lambda \geq a>0$.
Moreover,  the standard multiplicative inequality 
$$C_{\tilde m}(v,r):=\frac 1{r^{2\tilde m}}\int\limits_{Q(r)}|v|^3dz\leq $$
\begin{equation}
	\label{multple1}
\leq r^{\frac 32m_1+\frac 12-2\tilde m} A^\frac 34_{m_1}(v,r)(r^{m-m_1}E_m(v,r)+A_{m_1}(v,r))^\frac 34
\end{equation}  is valid for any $r>0$
and for any $\tilde m\geq m$. We let
 $v=v^{\lambda,\alpha}$, $\tilde m=m$, and $r=a$ in it and deduce from (\ref{uniformbound})  that
$$\|v^{\lambda,\alpha}\|_{3,Q(a)}\leq c(a)
$$ for all $1/\lambda\geq a>0$.
Therefore, 
$$\|\partial_\tau v^{\lambda,\alpha}(\cdot,s)\|_{L_\frac 32(-a^2,0;H^{-2}(B(a)))}+\|\nabla v^{\lambda,\alpha}\|_{2,Q(a)}+\| v^{\lambda,\alpha}\|_{2,Q(a)}\leq 
c(a).$$
By the known compactness result, the sequence $v^{\lambda_k,\alpha}$ is compact in $L_2(Q(a))$.
Using the Cantor's diagonal process, we can select a subsequence, converging
on any compact set, i.e.,
$v^{\lambda_k,\alpha}\to u$ in $L_2(Q(a))$ for any $a>0$. The latter immediately implies that
$u$ and $p
$ satisfy the Euler equations
$$\partial_tu+u\cdot\nabla u+\nabla p=0, \quad{\rm div}\,u=0$$
in $Q_-=\mathbb R^3\times ]-\infty,0[$.

By the same compactness property, one can state that $v^{\lambda_k,\alpha}\to u$ 
in $L_{s_0,2}(Q)$ for any $2\leq s_0<6$. Applying again  the known multiplicative inequality, we also have
$$\int\limits_{Q(a)}|v^{\lambda_k,\alpha}|^\frac {10}3dyd\tau\leq 
c\Big(\sup\limits_{-a^2<\tau<0}\int\limits_{B(a)}|v^{\lambda_k,\alpha}(y,\tau)|^2ds\Big)^\frac23\Big(\int\limits_{Q(a)}|\nabla v^{\lambda_k,\alpha}|^2dyd\tau+$$$$+\frac 1{a^2}\int\limits_{Q(a)}| v^{\lambda_k,\alpha}|^2dyd\tau\Big)\leq c(a)$$
for all $a>0$. The latter yields the  convergence in $L_{s_1,loc}$ with $3\leq s_1< \frac {10}3$.

Now, we let, for small positive $\varepsilon$,  
$$s_0=6-\varepsilon,\quad s_1=\frac {10}3-\varepsilon,$$$$ \frac 1{\tilde p(\lambda)}=\frac \lambda {s_0}+\frac {1-\lambda}{s_1}>\frac 1{p(\lambda)},\quad \frac1{\tilde q(\lambda)}=\frac \lambda 2+\frac {1-\lambda}{s_1}>\frac 1{q(\lambda)}. $$
By H\"older inequality
$$\|v^{\lambda_k,\alpha}-u\|_{\tilde p(\lambda),\tilde q(\lambda),Q}\leq \|v^{\lambda_k,\alpha}-u\|_{s_0,2,Q}^\lambda\|v^{\lambda_k,\alpha}-u\|_{s_1,Q}^ {1-\lambda}\to0$$
as $k\to\infty$.

It remains to pick up $\varepsilon$ small enough  to provide $s\leq \tilde p(\lambda)<p(\lambda)$ and $l\leq \tilde q(\lambda)<q(\lambda)$ and to see that $v^{\lambda_k,\alpha}\to $ in $L_{s,l}(Q)$. The latter implies \eqref{nontrivial}.


Next, by the scaling in the local energy inequality, we have, for $-a^2<\tau_0<0$,
$$\int\limits_{B(a)}|v^{\lambda_k,\alpha}(y,\tau_0)|^2\varphi(y,\tau_0)dy+2\lambda_k^{\alpha-1}\int\limits_{-a^2}^{\tau_0}\int\limits_{B(a)}\varphi|\nabla_yv^{\lambda_k,\alpha}|^2dyd\tau\leq$$
$$\leq \int\limits_{-a^2}^{\tau_0}\int\limits_{B(a)}(|v^{\lambda_k,\alpha}|^2(\partial_s\varphi+\Delta_y\varphi)+v^{\lambda_k,\alpha}\cdot\nabla_y\varphi(|v^{\lambda_k,\alpha}|^2+2
q^{\lambda_k,\alpha}))dyd\tau$$
for any smooth non-negative functions $\varphi$ having compact support in $B(a)\times]-a^2,a^2[$. 
So, in the limit as $k\to \infty$, we can get (\ref{basicestimates}) and (\ref{enerylocal}).


\setcounter{equation}{0}
\section{Simple Cases}

By basic estimates (\ref{basicestimates}), we have for $a>1$
$$\sup\limits_{-(\frac a2)^2<\tau<0}\int\limits_{B(a/2)}|u(y,\tau)|^2dy\leq ca^{m_1}$$
So, if $m_1<0$ which means that $m<\frac 12$,
then, sending $a\to \infty$, we conclude that $u=0$. Therefore, Type II singularities (\ref{growth}) under restrictions (\ref{ScenarioI}) cannot occur. In other words, if such a singularity exists, then 
$$\sup\limits_{0<R\leq 1}A_{m_1}(R)+D_m(R)+E_m(R)=\infty.$$
So, in what follow we shall assume that 
$m\geq\frac 12$.  The latter gives
the following lower bound on $m_0$:
\begin{equation}
	\label{lowerbndm0}
	m_0\geq \frac{2\Big(\frac{6}s+\frac {5}l-3\Big)}{5\Big(\frac 3s+\frac 2l-1\Big)}.
\end{equation}
It is check that the right hand side of (\ref{lowerbndm0}) satisfies
$$\frac{2\Big(\frac{6}s+\frac {5}l-3\Big)}{5\Big(\frac 3s+\frac 2l-1\Big)}\leq \frac 45.$$
So, (\ref{lowerbndm0}) holds if 
$$m_0\geq \frac 45.$$

Another question to ask is about whether $u$ can be an irrotational flow.
Assume that it is so, i.e., ${\rm rot}\,u=0$. Therefore, there is a potential $\varphi$ such that $u=\nabla \varphi$. Then, by incompressibility condition, $u$ and $\varphi$ are harmonic functions.
Let us fix $t_0<0$. Our aim is to show that the function $x\mapsto u(x,t_0)$ is bounded. It is locally bounded as $u$ is harmonic. In order to prove that $u(x,t_0)$ is bounded as $x\to\infty$, let us assume that $-|x_0|^2<t_0$. Then, by harmonicity of $u$, we have
$$|u(x_0,t_0)|^2\leq \frac 1{|B(x_0,|x_0|)|}\int\limits_{B(x_0,|x_0|)}|u(x,t_0)|^2dx\leq$$$$\leq  2^3\frac 1{|B(2|x_0|)|}\int\limits_{B(2|x_0|)}|u(x,t_0)|^2dx\leq$$$$\leq c|x_0|^{m_1-3}\sup\limits_{-(2|x_0|)^2<t<0}\frac 1{(2|x_0|)^{m_1}}\int\limits_{B(2|x_0|)}|u(x,t)|^2dx.$$
By (\ref{basicestimates}),
$$|u(x_0,t_0)|\leq c|x_0|^{m_1-3}$$ as long as $-|x_0|^2<t_0$. It implies that $u(x,t_0)=c(t_0)$ and thus $u(x,t_0)=0$ for all $x\in \mathbb R^3$. We conclude that $u$ cannot be irrotational.

\setcounter{equation}{0}
\section{Does scenario (\ref{ScenarioI}) admit Type II blowups?}

As one can observe in Appendix I, uniform boundedness of functionals of type $M^{s,l}_\kappa(v,r)$ with a certain index $\kappa$ does not exclude Type II blowups of special form and in turn, see Appendix II, provides uniform boundedness of certain scaled energy quantities. We are going to use it.

Let $0<m<1$ be fixed. Assume that numbers $s\geq 1$ and $l\geq 1$ satisfy assumptions \eqref{add-cond-sl}.

Suppose further that
\begin{equation}
	\label{TypeI'}
	\sup\limits_{0<R<1}M^{s,l}_{\kappa_m}(v,R)<\infty,
\end{equation}
where 
$$\kappa_m=m\kappa+q(1-m),$$

$$
\kappa=l\Big(\frac 3s+\frac 2l-1\Big),
\quad q=2l\Big(\frac 3{s}+\frac 2l-\frac 32\Big).$$

It is interesting to observe that the number $m_0$ satisfying the identity
$\kappa m_0=\kappa_m$ belongs to the interval $]0,1[$ and thus
$$M^{s,l}_{\kappa,m_0}(v,R)=M^{s,l}_{\kappa_m}(v,R).$$
Then, as it follows from Appendix II, we have the following estimate
$$A_{m_1}(v,r)+E_m(v,r)+ D_m(q,r)\leq A_m(v,r)+E_m(v,r)+\widetilde D_m(q,r)\leq c
$$ for all $0<r<1$, where $\widetilde D_m(q,r)=r^{m-1}D_m(r)$. The latter estimate takes place 
 since $m_1=2m-1<m$ and $2m<m+1$.

\setcounter{equation}{0}
\section{Self-Similarity}

Here, we are looking for the function  $u$ of  Proposition \ref{ancientsolution} in the following form
$$u(x,t)=\frac 1{(-t)^{\frac \alpha{\alpha+1}}}U(y,\tau),\quad p(x,t)=\frac 1{(-t)^{\frac {2\alpha}{\alpha+1}}}P(y,\tau),
$$
where 
$$y=\frac x{(-t)^{\frac 1{\alpha+1}}}, \quad \tau =-\ln(-t)
$$
and the number $\alpha>1$ is defined in (\ref{mandm0}). Then the Euler equations take the form 
\begin{equation}
	\label{special-form}
	\partial_\tau U+\frac \alpha{\alpha+1}U+\frac 1{\alpha+ 1}y\cdot\nabla U+U\cdot\nabla U+\nabla P=0,\quad{\rm div}\,U=0
\end{equation}
for  $(y,\tau)\in\mathbb R^3\times \mathbb R$.	

If $U(y,\tau)=U(y)$, then $u$ is a self-similar solution since
\begin{equation}\label{self-similar}
u(x,t)=\lambda^\alpha u(\lambda x,\lambda^{\alpha+1}t)
\end{equation}
for all positive $\lambda$.

If $U(y,\tau)=U(y,\tau+S_0)$ for any $(y,\tau)$ and for some positive $S_0$, then $u$ is a discrete self-similar solution for which (\ref{self-similar}) holds with $\lambda=e^{\frac  {S_0}{\alpha+1}}$.

Now, our goal is to describe additional restrictions on $U$ that follows from
(\ref{basicestimates}). First, we have
$$A_{m_1}(u,a)=\frac 1{a^{m_1}}\sup\limits_{-a^2<t<0}\int\limits_{B(a)}|u(x,t)|^2dx=$$
$$=\frac 1{a^{m_1}}\sup\limits_{-\ln a^2<\tau<\infty}e^{\frac \tau{1+\alpha}(2\alpha-3)}\int\limits_{B(ae^{\frac \tau{1+\alpha}})}|U(y,\tau)|^2dy.
$$
So, we have 
\begin{equation}
	\label{1st}
	\sup\limits_{a>0}\sup\limits_{-\ln a^2<\tau<\infty}b^{-m_1}(a,\tau)\int\limits_{B(b(a,\tau))}|U(y,\tau)|^2dy\leq c<
	\infty\end{equation}
with $b(a,\tau)=a e^{\frac \tau{1+\alpha}}$.

In the same way, we find
\begin{equation}\label{2nd}
\sup\limits_{a>0}E_m(u,a)=$$$$=\sup\limits_{a>0}	\int\limits^\infty_{-\ln a^2}\Big(\frac 1{b(a,\tau)}\Big)^md\tau\int\limits_{B(b(a,\tau))}|\nabla U(y,\tau)|^2dy<\infty
\end{equation}
 and
 \begin{equation}
 	\label{3rd}
 	\sup\limits_{a>0}D_m(u,a)=\sup\limits_{a>0}	\int\limits^\infty_{-\ln a^2}\Big(\frac 1{b(a,\tau)}\Big)^{2m}d\tau\int\limits_{B(b(a,\tau))}|P(y,\tau)|^\frac 32dy<\infty. \end{equation}

After change of variables in the local energy inequality, the following inequality comes out
$$\int\limits_{\mathbb R^3}\varphi(y,\tau_0)|U(y,\tau_0)|^2e^{-\frac {\tau_0m_1}{1+\alpha}}dy\leq
$$
\begin{equation}
	\label{self-loc-ineq}
	\leq\int\limits_{-\infty}^{\tau_0}\int\limits_{\mathbb R^3}e^{-\frac {\tau m_1}{1+\alpha}}\Big[|U|^2\Big(\frac 1{1+\alpha}y\cdot\nabla \varphi+\partial_\tau\varphi\Big)+U\cdot\nabla \varphi\Big(|U|^2+2P\Big)\Big]dyd\tau.
\end{equation}
The latter inequality is valid for all $\tau_0$ and for all negative $\varphi\in C^\infty_0(\mathbb R^3\times \mathbb R)$.

Let us take any non-negative function $\psi\in C^\infty_0(\mathbb R^3\times \mathbb R)$. Let $\varphi=\psi e^{\frac{\tau m_1}{1+\alpha}}$. Then $\partial_\tau\varphi=\partial_\tau\psi e^{\frac{\tau m_1}{1+\alpha}}+\psi e^{\frac{\tau m_1}{1+\alpha}}\frac {m_1}{1+\alpha} $ and inequality (\ref{self-loc-ineq}) takes the form
$$\int\limits_{\mathbb R^3}\psi(y,\tau_0)|U(y,\tau_0)|^2dy\leq
$$
\begin{equation}
	\label{self-loc-ineq1}
	\leq\int\limits_{-\infty}^{\tau_0}\int\limits_{\mathbb R^3}\Big[|U|^2\Big(\frac 1{1+\alpha}y\cdot\nabla \psi+\partial_\tau\psi+\frac {m_1}{1+\alpha}\psi\Big)+U\cdot\nabla \psi\Big(|U|^2+2P\Big)\Big]dyd\tau
\end{equation}
for all non-negative $\psi\in C^\infty_0(\mathbb R^3\times \mathbb R)$.

First, let us consider  the case of self-similarity: $U(y,\tau)=U(y)$.
Then, for any $b\geq 1$, we can take $a=b$ and $\tau=0$. Obviously, $-\ln a^2\leq \tau$ and, hence, we can deduce from (\ref{1st}) the following:
\begin{equation}
	\label{1sts-s}
	\sup\limits_{b\geq 1}\frac 1{b^{m_1}}\int\limits_{B(b)}|U(y)|^2dy\leq c<\infty.
\end{equation}
In particular, it means that our solution cannot be a non-trivial self-similar one if $m_1<0$, i.e.,
\begin{equation}
	\label{nots-s1}
	 m <\frac 12.
\end{equation}
Next, from (\ref{2nd}) it follows that
$$c\geq \int\limits^\infty_{B(ae^{\frac {-\ln a^2}{1+\alpha}})}|\nabla U|^2dy
\int\limits^\infty_{-\ln a^2}\frac 1{(ae^{\frac \tau{1+\alpha}})^m}d\tau$$
$$= \frac {\alpha+1}m \Big(a^{-\frac {\alpha-1}{\alpha+1}}\Big)^m\int\limits^\infty_{B(a^{\frac {\alpha-1}{\alpha+1}})}|\nabla U|^2dy
$$ 
for any $a>0$. The latter implies the second estimate for the self-similarity case
\begin{equation}
	\label{2nds-s}
\sup\limits_{b>0}\frac 1{b^m}\int\limits_{B(b)}|\nabla U|^2dy<\infty.
\end{equation}
In the same way, we can get the third estimate for the pressure
\begin{equation}
	\label{3rds-s}
\sup\limits_{b>0}\frac 1{b^{2m}}\int\limits_{B(b)}|P|^\frac 32dy<\infty.	\end{equation}
Now, let us evaluate the $L_3$-norm of $U$ over balls of radius $a>1$? Indeed, we have
$$\int\limits_{B(a)}|U|^3dx\leq c\Big(\int\limits_{B(a)}|U|^2dx\Big)^\frac 34\Big(\int\limits_{B(a)}|\nabla U|^2dx+\frac 1{a^2}\int\limits_{B(a)}|U|^2dx\Big)^\frac 34.$$
Taking into account estimates (\ref{1sts-s}) and (\ref{2nds-s}), we find
$$\int\limits_{B(a)}|U|^3dx\leq ca^{\frac 34m_1}(a^m+a^{m_1})^\frac 34<ca^{\frac 34(m_1+m)}$$
for any $a>1$. So,
\begin{equation}
	\label{l-3s-s}
	\sup\limits_{a>1}\frac 1{a^{\frac 34(m_1+m)}}\int\limits_{B(a)}|U|^3dx<\infty.	\end{equation}

\begin{pro}
	\label{simpleLiouville} Suppose that that the following additional conditions hold:
	\begin{equation}
		\label{1staddition}
	\frac 12< m	<\frac 35\quad(\Leftrightarrow 0<m_1<\frac 15)
	\end{equation}
	and
	\begin{equation}
		\label{2ndaddition}
		\sup\limits_{b\geq 1}\frac 1{b^{\gamma m_1}}\int\limits_{B(b)}|U|^2dy<\infty
	\end{equation}
	with $$0\leq \gamma < \frac {\ln {(2+m_1)}-\ln2}{m_1\ln 2}.$$
	Then $U(y)\equiv0$.
\end{pro}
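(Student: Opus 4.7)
\emph{Proof plan.} My strategy combines an exact ``dissipation-free'' energy identity (from testing the weak self-similar Euler equation against $\phi U$) with a dyadic iteration that extracts polynomial decay of $f(R):=\int_{B(R)}|U|^2\,dy$, forcing $U\equiv 0$ by monotonicity of $f$.

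First, I derive the identity. Since $U\in H^1_{\rm loc}$, $P\in L^{3/2}_{\rm loc}$, and ${\rm div}\,U=0$ (so $U\in L^6_{\rm loc}$ by Sobolev, putting us comfortably above Onsager's threshold), I may test the weak form of the stationary Euler equation \eqref{special-form} against $w=\phi U$ for a scalar cutoff $\phi\in C^\infty_0(\mathbb R^3)$ (approximating $\phi U\in H^1_0$ by smooth compactly supported fields). A standard integration by parts, using ${\rm div}\,U=0$ and the identity $m_1=3-2\alpha$, produces
$$\frac{m_1}{1+\alpha}\int|U|^2\phi\,dy+\frac{1}{1+\alpha}\int|U|^2\,y\cdot\nabla\phi\,dy+\int|U|^2\,U\cdot\nabla\phi\,dy+2\int P\,U\cdot\nabla\phi\,dy=0.$$
It is essential that this is an equality rather than the inequality one would get from \eqref{self-loc-ineq1} alone, which goes the wrong way for an upper bound on $f(R)$.

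Next I specialize to $\phi=\phi_R$, a radially decreasing smooth cutoff with $\phi_R\equiv 1$ on $B(R)$, $\phi_R\equiv 0$ outside $B(2R)$, and $|\nabla\phi_R|\le(1+\varepsilon)/R$ for a small prescribed $\varepsilon>0$. On the annulus $A_R:=B(2R)\setminus B(R)$ one has $y\cdot\nabla\phi_R\le 0$ and $|y\cdot\nabla\phi_R|\le 2(1+\varepsilon)$. Substituting into the identity and bounding each annulus term by its absolute value,
$$m_1 f(R)\le 2(1+\varepsilon)\bigl(f(2R)-f(R)\bigr)+\frac{C}{R}\int_{A_R}\bigl(|U|^3+|P||U|\bigr)\,dy,$$
i.e., $f(R)\le\theta(\varepsilon)\,f(2R)+g(R)$ with $\theta(\varepsilon)=\frac{2(1+\varepsilon)}{m_1+2(1+\varepsilon)}\to \frac{2}{m_1+2}$ as $\varepsilon\to 0$.

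The third step controls the error $g(R)$ and iterates. Using \eqref{l-3s-s}, H\"older on $A_R$, and \eqref{3rds-s},
$$g(R)\le C\bigl(R^{3(m_1+m)/4-1}+R^{(m_1+m)/4+4m/3-1}\bigr),$$
and substituting $m_1=2m-1$ shows that both exponents are strictly negative precisely when $m<3/5$; the pressure--velocity cross term is the binding constraint, and I expect this verification to be the most delicate step, matching exactly \eqref{1staddition}. Hence $g(R)\le CR^{-\delta}$ for some $\delta>0$. Iterating $n$ times,
$$f(R)\le\theta(\varepsilon)^n f(2^nR)+\sum_{k=0}^{n-1}\theta(\varepsilon)^k g(2^kR);$$
the geometric sum is $O(R^{-\delta})$, while \eqref{2ndaddition} gives $\theta(\varepsilon)^n f(2^nR)\le CR^{\gamma m_1}\bigl(\theta(\varepsilon)\cdot 2^{\gamma m_1}\bigr)^n$. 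The quantitative hypothesis on $\gamma$ is exactly $\frac{2}{m_1+2}\cdot 2^{\gamma m_1}<1$, so choosing $\varepsilon$ small enough preserves $\theta(\varepsilon)\cdot 2^{\gamma m_1}<1$, the leading term vanishes as $n\to\infty$, and I conclude $f(R)\le CR^{-\delta}$ for all large $R$. Since $f$ is non-decreasing and non-negative, $f\equiv 0$, and hence $U\equiv 0$.
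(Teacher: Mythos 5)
Your proof follows essentially the same route as the paper's: the same localized energy identity for the self-similar profile tested against a dyadic cutoff, the same recursion $F(R)\le\theta F(2R)+CR^{-\beta}$ with $\theta=\tfrac{2}{2+m_1}$ and $\beta=\tfrac54\bigl(1-\tfrac53 m\bigr)>0$ precisely when $m<\tfrac35$ (with the pressure--velocity term binding), and the same iteration exploiting $\theta\,2^{\gamma m_1}<1$. The only (harmless) differences are your $\varepsilon$-perturbed cutoff constant, which is in fact slightly more careful than the paper's $|y\cdot\nabla\psi|\le 2$, and your finishing step via monotonicity of $F$ after establishing $F(R)\le CR^{-\beta}$ rather than sending $k\to\infty$ term by term.
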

\begin{proof} First of all, it is not difficult to deduce from (\ref{special-form}) the following energy identity (we have enough regularity):
$$0=\int\limits_{\mathbb R^3}\Big[|U|^2\Big(\frac 1{1+\alpha}y\cdot\nabla \psi+\frac {m_1}{1+\alpha}\psi\Big)+U\cdot\nabla\psi\Big(|U|^2+2P\Big)\Big]dy.$$
It is valid for any non-negative cut-off function $\psi\in C^\infty_0(\mathbb R^3)$.
We can pick up a cut-off function $0\leq \psi\leq 1$ so that $\psi=1$ in $B(R)$, $\psi=0$ out of $B(2R)$, and $|\nabla \psi|\leq c/R$ and $|y\cdot\nabla \psi|\leq 2$. Then, we have
$$\frac {m_1}{1+\alpha }\int\limits_{B(R)}|U|^2dy\leq  I_1+I_2+I_3,$$
$$I_1=-\frac 1{1+\alpha}\int\limits_{B(2R)\setminus B(R)}|U|^2y\cdot\nabla \psi dy,	\quad I_2=-\int\limits_{B(2R)}|U|^2U\cdot\nabla \psi dy,	$$
$$I_3=-2\int\limits_{B(2R)}PU\cdot\nabla \psi dy.$$
 Now, let us evaluate the first integral
 $$I_1\leq \frac 2{1+\alpha}(F(2R)-F(R)),$$
where
$$	F(R)=\int\limits_{B(R)}|U|^2dy.$$
To bound the second and the third integrals, we are going to use 	estimates \eqref{3rds-s} and \eqref{l-3s-s}:
$$	|I_2|+|I_3|\leq C(R^{\frac 34(3m-1)-1}+R^{\frac 14(3m-1)+\frac {4m}3-1})\leq \frac C{R^\beta},$$
where $\beta =\frac 54(1-\frac 53m)>0$.

Finally, we find
\begin{equation}
	\label{mainfor}
	F(R)\leq \theta F(2R)+\frac C{R^\beta}
\end{equation}
for all $R\geq 1$ with $\theta=\frac 2{2+m_1}.$
	
Iterations of \eqref{mainfor} gives
$$	F(R)\leq \theta^kF(2^kR)+C\frac {\theta^{k-1}}{R^\beta}\sum\limits^{k-1}_{i=0}\frac 1{(2^\beta\theta)^i}=
$$	$$=(\theta 2^{\gamma m_1})^kC+A(k,\theta,R),
$$
where 
$$A(k,\theta,R)=C\frac {\theta^{k-1}}{R^\beta}\sum\limits^{k-1}_{i=0}\frac 1{(2^\beta\theta)^i}.
$$	
By the assumption on $\gamma$, $\theta2^{\gamma m_1}<1$. So, $(\theta 2^{\gamma m_1})^kC\to 0$ as $k\to\infty$.
If we show that $A(k,\theta,R)\to 0$ as $k\to\infty$, then $F(R)=0$. Since it is true for all $R\geq 1$, we can conclude that $U\equiv0$.
Now, consider three cases. In the first one,
$2^\beta\theta>1$. Then,
$$A(k,\theta,R)=\frac {\theta^{k-1}}{R^\beta}\frac{1-\frac 1{(2^\beta\theta)^k}}{1-\frac 1{2^\beta\theta}}< \frac {\theta^{k-1}}{R^\beta}\frac{1}{1-\frac 1{2^\beta\theta}}\to0
$$ as $k\to\infty$.

In the second case, $2^\beta\theta<1.$ So,
$$A(k,\theta,R)=\frac {\theta^{k-1}}{R^\beta}\frac{1-\frac 1{(2^\beta\theta)^k}}{1-\frac 1{2^\beta\theta}}=\frac 1{R^\beta}\frac 1{2^{\beta(k-1)}}\frac{1-(2^\beta\theta)^k}{1-2^\beta\theta}\to 0
$$ as $k\to\infty$.

Finally, if $2^\beta\theta=1$,
$$A(k,\theta,R)=\frac {\theta^{k-1}}{R^\beta}k\to0$$ as $k\to\infty$. This completes our proof of the proposition.
\end{proof}

Similar arguments work for discrete self-similar solutions. We need to assume that our discrete self-similar solution is smooth enough for having the local energy identity instead of the local energy inequality. Then, we find
$$\int\limits^{S_0}_0\int\limits_{\mathbb R^3}\Big[|U(y,\tau)|^2\Big(\frac 1{1+\alpha}y\cdot\psi(y)+\frac {m_1}{1+\alpha}\psi(y)\Big)+U(y,\tau)\cdot\nabla \psi(y)\Big(|U(y,\tau)|^3+$$
$$+2P(y,\tau)\Big)\Big]dyd\tau=0
$$
for any non-negative $\psi\in C^\infty_0(\mathbb R^3)$. With the same choice of the cut-off  function $\psi$ as in the proof of Proposition \ref{simpleLiouville}, we shall have
$$F(R):=\int\limits_{B(R)}\int\limits_0^{S_0}|U(y,\tau)|^2dyd\tau\leq \theta F(2R)+\frac cR\int\limits^{S_0}_0\int\limits_{B(2R)}\Big(|U|^3+|U||P|)dyd\tau.$$

So, we need to evaluate integrals of $|U|^3$ and $|P|^\frac 32$. To this end we are going to exploit estimates \eqref{1st}, \eqref{2nd}, and \eqref{3rd} in the following way.
Given $0<\tau<S_0$ and $R>e^{\frac {2S_0}{1+\alpha}}$, we can find $a(\tau,R)>0$ such $b(a(\tau,R),\tau)=2R$. It is easy to check that $-\ln a^2<\tau<\infty$ and moreover $$2Re^{\frac {S_0}{1+\alpha}}>b(2R,\tau)>2R=b(a(\tau,R),\tau)\geq a(\tau,R)=2Re^{-\frac \tau{1+\alpha}}=
$$$$=2Re^{\frac \tau{1+\alpha}}e^{-2\frac \tau{1+\alpha}}\geq b(2R,\tau)e^{-2\frac {S_0}{1+\alpha}}\geq 2Re^{-2\frac {S_0}{1+\alpha}}>2.$$ 
In addition, for $0<\tau<S_0$, one has $-\ln(2R)^2<0<\tau$. Then,
 for example, it follows from \eqref{1st} that
$$\int\limits_{B(2R)}|U(y,\tau)|^2dy=\int\limits_{B(b(a(\tau,R),\tau)}|U(y,\tau)|^2dy\leq \int\limits_{B(b(2R,\tau))}|U(y,\tau)|^2dy\leq 
$$$$\leq Cb^{m_1}(2R,\tau)= CR^{m_1}$$ 
for all $0<\tau<S_0$ with a constant independent of $R$ and $\tau$.  

In the same way, one can treat the integral of $|\nabla U|^2$. Indeed,
$$
\int\limits_0^{S_0}\int\limits_{B(2R)}|\nabla U(y,\tau)|^2dyd\tau=$$$$=\int\limits_0^{S_0}b^{m}(a(\tau,R),\tau)\frac 1{b^{m}(a(\tau,R),\tau)}\int\limits_{B(b(a(\tau,R),\tau))}|\nabla U(y,\tau)|^2dyd\tau=$$$$=(2R)^{m}\int\limits_0^{S_0}\frac 1{b^{m}(a(\tau,R),\tau)}\int\limits_{B(b(a(\tau,R),\tau))}|\nabla U(y,\tau)|^2dyd\tau. $$
$$\leq c(S_0,\alpha,m)R^{m}\int\limits_0^{S_0}\frac 1{b^{m}(2R,\tau)}\int\limits_{B(b(2R,\tau))}|\nabla U(y,\tau)|^2dyd\tau\leq$$
$$\leq c(S_0,\alpha,m)R^{m}\int\limits_0^{S_0}\frac 1{b^{m}(2R,\tau)}\int\limits_{B(b(2R,\tau))}|\nabla U(y,\tau)|^2dyd\tau\leq$$
$$\leq c(S_0,\alpha,m)R^{m}\int\limits_{-\ln(2R)^2}^{\infty}\frac 1{b^{m}(2R,\tau)}\int\limits_{B(b(2R,\tau))}|\nabla U(y,\tau)|^2dyd\tau\leq CR^{m}.$$

Similar bound is valid for the pressure $P$:
$$\int\limits^{S_0}_0\int\limits_{B(2R)}|P|^\frac 32dyd\tau\leq CR^{2m}.
$$
 To find a bound of the integral of $|U|^3$, one  can use the above multiplicative inequality:
$$\int\limits^{S_0}_0\int\limits_{B(2R)}|U|^3dyd\tau\leq c\int\limits^{S_0}_0d\tau \Big(\int\limits_{B(2R)}|U|^2dy\Big)^\frac 34\Big(\int\limits_{B(2R)}|\nabla U|^2dy+$$$$+\frac 1{R^2}\int\limits_{B(2R)}|U|^2dy)\Big)^\frac 34\leq CR^{\frac 34m_1}\Big(R^m+R^{m_1}\frac{S_0}{R^2}\Big)^\frac 34\leq CR^{\frac 34(m+m_1)}.$$
 
Now, from the latter estimates, it follows  
  \eqref{mainfor}. The rest of the proof of $U\equiv0$ is the same as in Proposition \ref{simpleLiouville} if we make  similar additional  assumption:
	$$	\sup\limits_{b\geq 1}\frac 1{b^{\gamma m_1}}\int\limits_0^{S_0}\int\limits_{B(b)}|U|^2dy<\infty.$$
Here, a positive number  $\gamma$ obeys restrictins of Proposition \ref{simpleLiouville}.

\setcounter{equation}{0}
\section{Appendix I}
In this section, we are going to understand what kind of  Type II blowups is potentially possible provided the quantity $M^{s,l}_{\kappa,m_0}(v,R)$ is bounded. Consider potential blowups of Type II with the following property:
$$|v(x,t)|<c \Big(\frac 1{|x|^\alpha(-t)^\frac {(1-\alpha)2}2}\Big)^\gamma,
$$ where $\gamma>1$ and $0\leq \alpha\leq 1$. 

Now, our goal is to show that, given $0<m_0<1$,  $0< \alpha< 1$, one can find  numbers $ s$ and $l$ satisfying assumptions (\ref{add-cond-sl}) and a number $\gamma >1$
 such that 
 \begin{equation}
 	\label{bound6}\sup\limits_{0<R<1}M^{s,l}_{\kappa,m_0}(v,R)=\sup\limits_{0<R<1}R^{(1-m_0)\kappa}M^{s,l}_{\kappa}(v,R)<\infty.
 \end{equation}
To this end, let us
 pick up a number $\delta>0$ so that
\begin{equation}
\label{delta}	
1<\max\Big\{\frac 6{3+\alpha},\frac 4{3-\alpha}\Big\}<\delta(2-m_0)<2	
\end{equation} 
and fix it. Now, we let
$$\frac 1l=\delta\frac {2-m_0}2(1-\alpha),\quad\frac 1s=\delta\frac {2-m_0}3\alpha.
 $$ Now, by (\ref{delta}),
 $$\frac 3s+\frac 2l-\frac 32=\delta(2-m_0)-\frac 32<\frac 12.$$
 Next, again by (\ref{delta}),
  $$\frac 3s+\frac 2l-\frac 32=\delta(2-m_0)-\frac 32>\frac 12-\delta\frac {2-m_0}2(1-\alpha)=\frac 12-\frac 1l$$
 and 
 $$\frac 3s+\frac 2l-\frac 32=\delta(2-m_0)-\frac 32 >\delta \frac {2-m_0}4(1-\alpha)=\frac 1{2l}.$$
 So, numbers $s$ and $l$ obey conditions (\ref{add-cond-sl}).
  
  Now, we can introduce number $\gamma$, which by (\ref{add-cond-sl}) satisfying the following restriction
  \begin{equation}
  	\label{gamma6}1<\gamma=\Big(\frac 3s+\frac 2l-1\Big)(1-m_0)+1
  \end{equation}
  and as it follows from (\ref{delta})
  $$\delta(2-m_0)-\gamma=
  \delta(2-m_0)-((\delta(2-m_0) -1) (1-m_0)+1)=$$
  $$=\delta(2-m_0)m_0-m_0=m_0(\delta(2-m_0)-1)>0.$$
 But the latter implies the following inequalities 
 \begin{equation}
 	\label{integration}
 \frac 1l>\frac 12\gamma(1-\alpha), \quad\frac 1s>\frac 13\gamma\alpha.	\end{equation}

Now,  according to (\ref{integration}) and the choice of $\gamma$, the following calculations are legal:
$$M^{s,l}_{\kappa,m_0}(v,R)\leq cR^{-\kappa m_0}\int\limits_{-R^2}^0\Big(\int\limits_{B(R)}\Big(\frac 1
{|x|^\alpha(-t)^{\frac {(1-\alpha)}2}}\Big)^{s\gamma}dx\Big)^\frac lsdt$$
$$=cR^{-\kappa m_0}\int\limits^{R^2}_0t^{-\frac {1-\alpha}2\gamma l}dt\Big(\int^R_0r^{2-\alpha s\gamma}dr\Big)^\frac ls=
$$
$$=cR^{-\kappa m_0+(2-(1-\alpha)\gamma l)+(3-\alpha s\gamma)\frac ls}=c<\infty.
$$ 

Now, let us consider the case $\alpha=0$ and $\alpha=1$ in a similar way. The number $\gamma$ is going to be determined by the same formula (\ref{gamma6}).  Our aim is to show that, under a certain additional assumption on the number $m_0$, one can find numbers $s$ and $l$ satisfying (\ref{add-cond-sl}) and  (\ref{bound6}). 

For $\alpha=0$, assume that $\frac {19}{20}<m_0<1$, then one can let
$$s=\frac{10}{3
(1+\delta_1)},\quad l=\frac {20}{11(1-\delta_2)},
$$ where positive numbers $\delta_1$ and $\delta_2$ can be taken sufficiently small so that conditions (\ref{add-cond-sl}) hold and $l\gamma<2$.

If $\alpha=1$, the we assume that $\frac 45<m_0<1$ and let
$$s=\frac {15}{7(1-\delta_1)},\quad l=\frac {10}{3(1+\delta_2)}
$$
with sufficiently small positive numbers $\delta_1$ and $\delta_2$ chosen so that conditions (\ref{add-cond-sl}) hold and $s\gamma<3$.

\setcounter{equation}{0}
\section{Appendix II}

What we have for suitable weak solutions? There are three standard estimates, see \cite{Seregin2015} :
\begin{equation}
	\label{multiple2II}
	C_{1}(v,r)\leq c
	A^\frac 34_1(v,r)\Big(E_1(v,r)+A_1(v,r)\Big)^\frac 34
\end{equation}
for all $0<r\leq 1$;
\begin{equation}
	\label{pressure2II}
	D_1(q,r)\leq c\Big(\frac r\varrho D_1(q,\varrho)+\Big(\frac \varrho r\Big)^2C_1(v,\varrho)\Big)
\end{equation}
for all $0<r<\varrho\leq 1$;
\begin{equation}
	\label{energylocineqII}
	A_1(v,r)+E_1(v,r)\leq c\Big(C^\frac 23_1(v,2r)+C_1(v,2r)+D_1(q,2r)\Big)
\end{equation}
for all $0<r\leq 1/2$.

Let us modify (\ref{multiple2II}), using arguments from  papers \cite{SZ2006} and \cite{SS2018}. By H\"older inequality, we have
$$\int\limits_{B(\varrho)}|v|^3dx\leq \Big(\int\limits_{B(\varrho)}|v|^sdx\Big)^\lambda\Big(\int\limits_{B(\varrho)}|v|^2dx\Big)^\mu\Big(\int\limits_{B(\varrho)}|v|^6dx\Big)^\gamma, 
$$
where 
$$\lambda={\frac lq}\frac 1 s,\quad\gamma =\frac lq\Big(\frac{2}{s}+\frac 1l-1\Big),\quad
\mu=\frac lq\Big(\frac 3s+\frac 3l-2\Big),\quad q=2l\Big (\frac 3s+\frac 2l-\frac 32\Big)
$$ and numbers $s$ and $l$ satisfy restrictions (\ref{add-cond-sl}).

By Gagliardo-Nirenberg inequality, we then have
$$\int\limits_{B(\varrho)}|v|^3dx\leq c\varrho^{\mu n}A^\mu_{n}(v,\varrho)\Big(\int\limits_{B(\varrho)}|v|^sdx)^\lambda\Big(\int\limits_{B(\varrho)}(|\nabla v|^2+\frac 1{\varrho^2}|v|^2)dx\Big)^{3\gamma}
$$
and thus (since $3\gamma q'=1$ with $q'=\frac q{q-1}$)
$$\widetilde C_n(v,\varrho)=\frac 1{\varrho^{n+1}}\int\limits_{Q(\varrho)}|v|^3dz\leq c\varrho^{\mu n-n-1}A^\mu_{n}(v,\varrho) \Big(\int\limits^0_{-\varrho^2}\Big(\int\limits_{B(\varrho)}|v|^sdx\Big)^\frac 
lsdt\Big)^\frac 1q\times
$$
$$\times \varrho^{\frac n
{q'}}\Big(\frac 1{\varrho^n}\int\limits_{Q(\varrho)}|\nabla u|^2dz+\frac 1{\varrho^{n+2}}\int\limits_{Q(\varrho)}|u|^2dz\Big)^\frac 1{q'}=$$$$=c\varrho^{\mu n-n-1}\varrho^{\frac n
{q'}}\varrho^\frac {\kappa_n} qA^\mu_{n}(v,\varrho)(E_n(v,\varrho)+H_n(v,\varrho))^\frac 1{q'}(M_{\kappa_n}^{s,l}(v,\varrho))^\frac 1q=$$$$=cA^\mu_{n}(v,\varrho)(E_n(v,\varrho)+H_n(v,\varrho))^\frac 1{q'}(M_{\kappa_n}^{s,l}(v,\varrho))^\frac 1q.$$ 
Here,
$$H_n(v,\varrho)=\frac 1{\varrho^{n+2}}\int\limits_{Q(\varrho)}|u|^2dz, \quad M_{\kappa_n}^{s,l}(v,\varrho)= \frac 1{\varrho^{\kappa_n}}\int\limits^0_{-\varrho^2}\Big(\int\limits_{B(\varrho)}|v|^sdx\Big)^\frac lsdt,
$$
and
$$\kappa_n=n\kappa+q(1-n),\quad\kappa=
l\Big(\frac 3s+\frac 2l-1\Big).
$$

Let 
\begin{equation}
	\label{Mkappan}
	M_n=\sup\limits_{0<\varrho<1}M^{s,l}_{k_n}(v,\varrho)<\infty.
\end{equation}
 Then, 
$$\widetilde C_{n}(v,\varrho)\leq \varepsilon(E_{n}(v,\varrho)+A_{n}(v,\varrho))+c(q,\varepsilon)M_nA_{n}(v,\varrho)A^{\mu q}(v,\varrho).$$   Since $\mu q<1$, we find
$$\widetilde C_{n}(v,\varrho)\leq 2\varepsilon(E_{n}(v,\varrho)+A_{n}(v,\varrho))+c(s,l,\varepsilon)M_n^{\frac 1{1-\mu q}}.$$

In what follows, we are going to drop $v$ in our notation for energy quantities temporarily. Then local energy inequality (\ref{energylocineqII}) gives us:
$$A_{n}(R/2)+E_{n}(R/2)=(R/2)^{1-n}(A_1(R/2)+E_1(R/2))\leq$$$$\leq cR^{1-n}(C_1^\frac 23(R)+C_1(R)+D_1(R))=$$$$=c[R^{\frac 13(1-n)}\widetilde C_n^\frac 23(R)+\widetilde C_n(R)+\widetilde D_n(R)]\leq $$$$\leq c[\widetilde C_n^\frac 23(R)+\widetilde C_n(R)+\widetilde D_n(R)],$$
where $\widetilde D_n(R)=R^{n-1}D_n(R)$.

 It remains to scale the pressure decay estimate (\ref{pressure2II}), which leads to the following:
$$\widetilde D_n(\varrho)\leq c\Big[\Big(\frac \varrho r\Big)^{2-n}\widetilde D_n(r)+\Big (\frac r\varrho\Big)^{n+1}\widetilde C_n(r)\Big ]$$
for all $0<\varrho<r<1$.

Letting 
$$\mathcal E(\varrho)=A_n(\varrho)+E_n(\varrho)+\widetilde D_n(\varrho),
$$
we have 
$$A_{n}(\theta\varrho)+E_{n}(\theta\varrho)
\leq c[\widetilde C_n^\frac 23(2\theta\varrho)+\widetilde C_n(2\theta\varrho)+\tilde D_n(2\theta\varrho)]$$
and 
$$\widetilde D_n(\theta\varrho)\leq c\Big[\Big(\theta)^{2-n}\widetilde D_n(\varrho)+\Big(\frac 1\theta\Big)^{n+1}\widetilde C_n(\varrho)\Big ].$$
Elementary calculations show:
$$\mathcal E(\theta \varrho)\leq c\Big[\theta^{2-n}\widetilde D_n(\varrho)+c_1(\theta)\widetilde C_n^\frac 23(\varrho)+c_2(\theta)C_n(\varrho)\Big]\leq $$$$
c\Big[\theta^{2-n}\widetilde D_n(\varrho)+c_1(\theta)(2\varepsilon(E_{n}(v,\varrho)+A_{n}(v,\varrho))+c(s,l,\varepsilon)M_n^{\frac 1{1-\mu q}})^\frac 23+$$$$+ c_2(\theta)(2\varepsilon(E_{n}(v,\varrho)+A_{n}(v,\varrho))+c(s,l,\varepsilon)M_n^{\frac 1{1-\mu q}})\Big]\leq 
$$
$$\leq c\Big[(\theta^{2-n}+c_2(\theta)\varepsilon)\mathcal E(\varrho)+c_1(\theta)\varepsilon^\frac 23\mathcal E^\frac 23(\varrho)+c_3(s,l,\varepsilon,\theta,M_n)\Big]\leq $$
$$\leq c\Big[(\theta^{2-n}+c_2(\theta)\varepsilon +c_1^\frac 32(\theta)\varepsilon^\frac 12)\mathcal E(\varrho)+\varepsilon+
c_3(s,l,\varepsilon,\theta,M_n)\Big].$$

We can find $\theta$ so that
$$c\theta^{2-n}<\frac 14$$
and then $\varepsilon$ such that
$$c(c_2(\theta)\varepsilon +c_1^\frac 32(\theta)\varepsilon^\frac 12)<\frac 14$$
and thus 
$$\mathcal E(\theta\varrho)<\frac 12\mathcal E(\varrho)+c(\varepsilon+
c_3(s,l,\varepsilon,\theta,M_n)).$$ 
From the latter inequality, it follows that:
$$ 
\mathcal E(\theta^k\varrho )<\frac 1{2^k}\mathcal E(\varrho)+
c_4(s,l,\varepsilon,\theta,M_n)$$
for any $k=1,2,...$ and therefore
$$\mathcal E(r)<c(\theta)r^{c_6(\theta)}\mathcal E(1)+c_5(s,l,\varepsilon,\theta,M_n)$$
for all $0<r\leq 1$ with a positive constant $c_6(\theta)$. So, we have proved the following statement
\begin{pro}
	\label{additinalestimates}
	Let $v$ and $q$ be a suitable weak solution to the Navier-Stokes equations in $Q$. Assume that conditions (\ref{Mkappan}) and (\ref{add-cond-sl}). Then
$$A_n(\varrho)+E_n(\varrho)+\widetilde D_n(\varrho)\leq c$$	
for all $0<\varrho\leq 1$.	
	\end{pro}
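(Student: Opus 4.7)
The plan is to produce the uniform Morrey bound on $\mathcal E(\varrho) := A_n(\varrho) + E_n(\varrho) + \widetilde D_n(\varrho)$ through the classical iteration scheme: establish a self-improving decay inequality of the form $\mathcal E(\theta \varrho) \leq \tfrac12 \mathcal E(\varrho) + C$ for some fixed $\theta \in (0,1)$, and then iterate on the geometric sequence $\varrho_k = \theta^k$. The coefficient $\tfrac12$ is what converts a priori finiteness into a uniform bound, and the constant $C$ is allowed to depend on $s,l$ and the structural quantity $M_n$.

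The engine is a sharp multiplicative bound on the scaled cubic $\widetilde C_n(v,\varrho) = \varrho^{-(n+1)} \int_{Q(\varrho)} |v|^3\, dz$. I would split $|v|^3 = |v|^{s\lambda}\,|v|^{2\mu}\,|v|^{6\gamma}$ via a three-term Hölder inequality in $L^1(B(\varrho))$ with $\lambda+\mu+\gamma = 1$ and the specific exponents $\lambda, \mu, \gamma, q$ recorded in the text, apply Gagliardo-Nirenberg to the $L^6$ factor in space, and integrate in time with Hölder at exponent $q$. Tracking the powers of $\varrho$ carefully (this is where the definition $\kappa_n = n\kappa + q(1-n)$ is used to achieve exact cancellation), one arrives at
\[
\widetilde C_n(v,\varrho) \leq c\, A_n^\mu(v,\varrho)\bigl(E_n(v,\varrho) + H_n(v,\varrho)\bigr)^{1/q'} \bigl(M^{s,l}_{\kappa_n}(v,\varrho)\bigr)^{1/q}.
\]
Condition \eqref{add-cond-sl} forces $\mu q < 1$, so Young's inequality absorbs the $A_n + E_n$ factor, giving $\widetilde C_n(v,\varrho) \leq \varepsilon\bigl(A_n(\varrho) + E_n(\varrho)\bigr) + c(s,l,\varepsilon)\, M_n^{1/(1-\mu q)}$ for every $\varepsilon > 0$.

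Next I would rescale the three classical suitable-weak-solution estimates \eqref{multiple2II}--\eqref{energylocineqII}, which hold at the critical exponent $n=1$, to the subcritical exponent $n$ by multiplying through by appropriate powers of $\varrho$; this produces the rescaled local energy inequality
\[
A_n(\varrho/2) + E_n(\varrho/2) \leq c\bigl(\widetilde C_n^{2/3}(\varrho) + \widetilde C_n(\varrho) + \widetilde D_n(\varrho)\bigr)
\]
and the rescaled pressure decay
\[
\widetilde D_n(\theta\varrho) \leq c\bigl(\theta^{2-n}\widetilde D_n(\varrho) + \theta^{-(n+1)}\widetilde C_n(\varrho)\bigr).
\]
Combining these with the cubic bound and using $n < 2$, I would first pick $\theta$ small so that $c\,\theta^{2-n} < 1/4$, then select $\varepsilon$ small depending on $\theta$ so that the cross terms $c_1(\theta)\varepsilon^{1/2} + c_2(\theta)\varepsilon$ coming from $\widetilde C_n^{2/3}$ and $\widetilde C_n$ are likewise $< 1/4$. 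This yields the desired decay $\mathcal E(\theta\varrho) \leq \tfrac12 \mathcal E(\varrho) + c(s,l,\theta,\varepsilon,M_n)$, whose iteration gives $\mathcal E(\theta^k) \leq 2^{-k}\mathcal E(1) + C'$ and hence the polynomial bound $\mathcal E(r) \leq c r^{\alpha_0} \mathcal E(1) + C'$ for all $0 < r \leq 1$.

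The principal obstacle is the exponent bookkeeping in the cubic estimate: one must verify that the three separate powers of $\varrho$ arising from the spatial $L^s$, $L^2$, and $L^6$ norms, together with the $\varrho^{\kappa_n/q}$ from the $L^{s,l}$ time Hölder factor, cancel exactly, and that simultaneously $\mu q < 1$. Both are guaranteed precisely by the Ladyzhenskaya-Prodi-Serrin-type window \eqref{add-cond-sl}; outside that window either the Hölder exponents become inadmissible or the Young absorption step fails. The ancillary term $H_n(\varrho)$ is dominated by $A_n(\varrho)$ up to the factor $\varrho^{-2}$, which is harmless since Poincaré lets us reinsert it into the iteration bound without inflating constants. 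Everything else—rescaling of \eqref{multiple2II}--\eqref{energylocineqII} and iteration of the decay inequality—is standard.
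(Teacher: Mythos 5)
Your proposal follows essentially the same route as the paper's Appendix II argument: the three-term H\"older splitting of $|v|^3$ with the exponents $\lambda,\mu,\gamma,q$, Gagliardo--Nirenberg on the $L^6$ factor, absorption via $\mu q<1$, rescaling of the three standard estimates from exponent $1$ to exponent $n$, and the choice of $\theta$ then $\varepsilon$ followed by iteration. The only cosmetic difference is your remark about $H_n$: no Poincar\'e is needed, since integrating the sup in time over an interval of length $\varrho^2$ gives $H_n(v,\varrho)\leq A_n(v,\varrho)$ directly.
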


\setcounter{equation}{0}
\section{Appendix III}



Here, we outline our proof of Theorem \ref{extension}.

 The statement has been already proved in \cite{SS2018}
  under the additional assumption 
  $$\frac 3s +\frac 2l-\frac 32>\max\Big\{\frac 1{2l},\frac 12-\frac 1l\Big\}.$$
So, we assume that
\begin{equation}
	\label{additional}
	\frac 3s +\frac 2l-\frac 32\leq\max\Big\{\frac 1{2l},\frac 12-\frac 1l\Big\}.
\end{equation}	
Let us consider three cases: $l=3$, $1\leq l<3$, and $l>3$. 

In the first case, it is easy to check that, from \eqref{additional}, it follows that $s\geq 3$. But then we have
$$C(R)\leq cM^{s,3}(R)\leq \mathcal M.$$
Then we repeat arguments from \cite{SS2018}.

In the second case where $1\leq l<3$, we deduce from \eqref{additional} that $s>3$. Then we let
$$\lambda =\frac 1{s-2},\quad\mu =\frac {s-3}{s-2} $$
and use the H\"older inequality in the following way
$$\int\limits_{B(R)}|v|^3dx\leq \Big(\int\limits_{B(R)}|v|^sdx\Big)^\lambda\Big(\int\limits_{B(R)}|v|^2dx\Big)^\mu.$$
Integration in $t$ and scaling give:
$$C(R)\leq c\mathcal M^\frac 1q A^\mu(R),$$
where $q=\frac l{\lambda s}$ and $0<\mu=\frac {s-3}{s-2}<1$. Then again we repeat arguments of \cite{SS2018}.

Now, consider the last case $l>3$. The condition \eqref{additional} says that $s\geq \frac 32$. Assume that that $s=\frac 32$. Then from \eqref{additional} it follows that $l=\infty$ and it contradicts with main assumption \eqref{weak13}. So, we must conlcude
that 
\begin{equation}
	\label{s} s>\frac 32.
\end{equation}
Now, let us consider two sub-cases: $s\geq 3$ and $\frac 32<s<3$.

In the first sub-case, we repeat arguments of the case $l=3$. The yield the following estimate
$$C(R)\leq c\Big(M^{s,l}(R)\Big)^\frac 3l\leq c\mathcal M^\frac 3l.$$

In the second sub-space, we let
$$0<\lambda =\frac 3{6-s}<1,\quad 0<\gamma=\frac{3-s}{6-s}<1.$$
By \eqref{s}, we have 
\begin{equation}
	\label{gamma}
	3\gamma<1.
\end{equation}
Introducing 
$q=\frac l{\lambda s}>1$ and observing that $3\gamma q'\leq 1$, where $q'=\frac q{q-1}$, we find after applications of H\"older  and Gagliardo-Nirenberg inequalities the following
$$\int\limits_{Q(R)}|v|^3dxdt\leq \int\limits^0_{-R^2}\Big(\int\limits_{B(R)}|v|^sdx\Big)^\lambda
\Big(\int\limits_{B(R)}|v|^6dx\Big)^\mu dt \leq 
$$
$$\leq \Big(\int\limits^0_{-R^2}\Big(\int\limits_{B(R)}|v|^sdx\Big)^{\lambda q}dt\Big)^\frac 1q
 \Big(\int\limits^0_{-R^2}\Big(\int\limits_{B(R)}|v|^6dx\Big)^{\gamma q'}dt\Big)^\frac 1{q'}\leq $$
$$\leq cR^\frac \kappa q\mathcal M^\frac 1q\Big(\int\limits^0_{R^2}\Big(\int\limits_{B(R)}(|\nabla v|^2+\frac 1{R^2}|v|^2)dx\Big)^{3\gamma q'}dt\Big)^\frac 1{q'}.$$
So, finally, we have 
$$C(R)\leq c\mathcal M^\frac 1q(E(R)+A(R))^{3\gamma}$$
with positive $\gamma$ satisfying \eqref{gamma}.
Then, in both sub-cases, it remains to repeat arguments of \cite{SS2018}.


\end{document}